\theoremstyle{plain}
\newtheorem{theorem}{Theorem}
\theoremstyle{example}
\newtheorem{example}{Example}
\theoremstyle{definition}
\theoremstyle{remark}
\numberwithin{equation}{section}
\newdimen\plusheight
\def\+{\;\lower\plusheight\hbox{$+$}\;}
\newdimen\minusheight
\def\-{\;\lower\minusheight\hbox{$-$}\;}
\newdimen\cdotsheight
\def\cds{\lower\cdotsheight\hbox{$\cdots$}}
\def\cchat{\hat{\mathbb{C}}}
\begin{document}
\title[Divergence of  $q$-Continued Fraction outside
the Unit Circle]
       {The Convergence and  Divergence of  $q$-Continued Fractions
outside
the Unit Circle}
\author{Douglas Bowman}
\address{Department Of Mathematical Sciences,
Northern Illinois University,
 De Kalb, IL 60115}
\email{bowman@math.niu.edu }
\author{James Mc Laughlin}
\address{Mathematics Department,
       Trinity College,
       300 Summit Street,
       Hartford, CT 06106-3100}
\email{james.mclaughlin@trincoll.edu}
\keywords{ Continued Fractions, Rogers-Ramanujan}
\subjclass{Primary:11A55,Secondary:40A15}
\thanks{The second author's research supported in part by a
Trjitzinsky Fellowship.}
\date{May, 11, 2002}
\begin{abstract}

We  consider two classes of $q$-continued fraction whose odd and even
parts are  limit 1-periodic for $|q|>1$,  and give
theorems which guarantee the convergence of the continued fraction,
 or of its odd- and even parts, at points outside the unit circle.

\end{abstract}

\maketitle

\section{Introduction}
Studying the convergence behaviour
of the odd and even parts of  continued fractions is interesting
for a number of different reasons (see, for example, \emph{Section 9.4} of \cite{JT80}).
In this present paper, we examine the
convergence behaviour of $q$-continued fractions outside the unit circle.

Many well-known $q$-continued fractions have the property that their
odd and even parts converge everywhere outside the unit circle.
These include the Rogers-Ramanujan continued fraction,
%{\allowdisplaybreaks
\begin{align*}
%&\phantom{as}\\
K(q) := 1 + \frac{q}{1}
\+
 \frac{q^{2}}{1}
\+
 \frac{q^{3}}{1}
\+
 \frac{q^{4}}{1}
\+
\cds
\end{align*}
%}
and the three
Ramanujan-Selberg continued fractions studied by Zhang in \cite{Z91}, namely,
{\allowdisplaybreaks
\begin{align*}
S_{1}(q):= 1 + \frac{q}{1}
\+
 \frac{q+q^{2}}{1}
\+
 \frac{q^{3}}{1}
\+
 \frac{q^{2}+q^{4}}{1}
\+
\cds ,
\end{align*}
}
%{\allowdisplaybreaks
\begin{align*}
S_{2}(q):=
1 +
 \frac{q+q^{2}}{1}
\+
 \frac{q^{4}}{1}
\+
 \frac{q^{3}+q^{6}}{1}
\+
 \frac{q^{8}}{1}
\+
\cds ,
\end{align*}
%}
and
%{\allowdisplaybreaks
\begin{align*}
S_{3}(q):=
1 +
 \frac{q+q^{2}}{1}
\+
 \frac{q^{2}+q^{4}}{1}
\+
 \frac{q^{3}+q^{6}}{1}
\+
 \frac{q^{4}+q^{8}}{1}
\+
\cds .
\end{align*}
%}
It  was proved in \cite{ABJL92} that if $0<|x| < 1$ then
the odd approximants of $1/K(1/x)$ tend to
%{\allowdisplaybreaks
\begin{align*}
1 - \frac{x}{1}
\+
 \frac{x^2}{1}
\-
 \frac{x^3}{1}
\+\,\cds
\end{align*}
%}
\begin{flushleft}
while the even approximants tend to
\end{flushleft}
%{\allowdisplaybreaks
\begin{align*}
\frac{x}{1}
\+
\frac{x^4}{1}
\+
\frac{x^8}{1}
\+
\frac{x^{12}}{1}
\+\,\cds
.\\
\phantom{as} \notag
\end{align*}
%}
This result was first stated, without proof, by Ramanujan. In \cite{Z91},
Zhang expressed the odd and even parts of each of $S_{1}(q)$,
$S_{2}(q)$ and $S_{3}(q)$  as infinite products,
for $q$ outside the unit circle.

Other $q$-continued fractions have the property that they converge
everywhere outside the unit circle. The most famous example of this latter
type is  G\"{o}llnitz-Gordon continued fraction,
{\allowdisplaybreaks
\begin{align*}
GG(q):=
1+q +
 \frac{q^{2}}{1+q^{3}}
\+
 \frac{q^{4}}{1+q^{5}}
\+
 \frac{q^{6}}{1+q^{7}}
\+\,\cds.
\end{align*}
}

In this present paper we study the convergence behaviour outside the unit circle
 of two families of  $q$-continued fractions, families which include
all of the above continued fractions.

\section{Convergence of the odd and even parts of $q$-continued fractions
 outside the unit circle}\label{sec3}

\vspace{1pt}

 Before coming to our theorems, we need some notation and some
results on limit 1-periodic continued fractions.

Let the $n$-th approximant of the continued fraction
$b_{0}+K_{n=1}^{\infty}a_{n}/b_{n}$ be $P_{n}/Q_{n}$.
The \emph{even} part of $b_{0}+K_{n=1}^{\infty}a_{n}/b_{n}$
 is the continued fraction whose
$n$-th numerator (denominator) convergent equals $P_{2n}$ ($Q_{2n}$),
for $n \geq 0$.
The \emph{odd} part of $b_{0}+K_{n=1}^{\infty}a_{n}/b_{n}$
is the continued fraction whose
zero-th numerator   convergent is $P_{1}/Q_{1}$, whose
zero-th denominator convergent is
$1$, and whose
$n$-th numerator (respectively denominator) convergent equals $P_{2n+1}$
(respectively $Q_{2n+1}$),  for $n \geq 1$.

 For later use we give
explicit expressions for  the odd- and even parts of a continued fraction.
From \cite{LW92}, page 83,
the even part of $b_{0}+K_{n=1}^{\infty}a_{n}/b_{n}$ is
given by
{\allowdisplaybreaks
\begin{multline}\label{ep}
%&\phantom{a} \notag\\
b_{0} +
 \frac{b_{2}a_{1}}{b_{2}b_{1}+a_{2}}
\-
 \frac{a_{2}a_{3}b_{4}/b_{2}}{a_{4}+b_{3}b_{4}+a_{3}b_{4}/b_{2}}
\-
 \frac{a_{4}a_{5}b_{6}/b_{4}}{a_{6}+b_{5}b_{6}+a_{5}b_{6}/b_{4}}
\-
\cds .
%\phantom{a}
\end{multline}
}
From \cite{LW92}, page 85,
the odd part of $b_{0}+K_{n=1}^{\infty}a_{n}/b_{n}$ is
given by
{\allowdisplaybreaks
\begin{multline}\label{op}
%&\phantom{a} \notag\\
\frac{b_{0}b_{1}+a_{1}}{b_{1}} -
 \frac{a_{1}a_{2}b_{3}/b_{1}}{b_{1}(a_{3}+b_{2}b_{3})+a_{2}b_{3}}
\-
 \frac{a_{3}a_{4}b_{5}b_{1}/b_{3}}{a_{5}+b_{4}b_{5}+a_{4}b_{5}/b_{3}}\\
\-
 \frac{a_{5}a_{6}b_{7}/b_{5}}{a_{7}+b_{6}b_{7}+a_{6}b_{7}/b_{5}}
\-
 \frac{a_{7}a_{8}b_{9}/b_{7}}{a_{9}+b_{8}b_{9}+a_{8}b_{9}/b_{7}}
\-
\cds .
%\phantom{a}
\end{multline}
}

\textbf{Definition:}. Let $t(w) = c/(1+w)$, where $c \not = 0$. Let
$x$ and $y$ denote the fixed points of the linear fractional transformation
$t(w)$. Then $t(w)$ is called
{\allowdisplaybreaks
\begin{align}\label{parelox}
&(i)\,\, \text{parabolic,}& &\text{if } x=y,\\
&(ii)\, \,\text{elliptic,}& &\text{if } x \not = y \text{ and }
|1+x|=|1+y|,\notag\\
&(iii)\,\, \text{loxodromic,}&  &\text{if } x \not = y
\text{ and } |1+x| \not = |1+y|.\notag
\end{align}
}
In case (iii), if $|1+x| > |1+y|$, then $\lim_{n \to \infty}t^{n}(w) = x$ for all
$w \not = y$, $x$ is called the \emph{attractive} fixed point
of $t(w)$ and $y$ is called the \emph{repulsive} fixed point
of $t(w)$.

Remark: The above definitions are usually given for more general
linear fractional transformations but we do not need this full
generality here.

The fixed points of $t(w)=c/(1+w)$ are
$x=(-1+\sqrt{1+4c})/2$ and $y=(-1+\sqrt{1+4c})/2$.
It is easy to see that $t(w)$ is parabolic only in the case
 $c=-1/4$, that it is elliptic only when $c$ is a real number
in the interval $(- \infty, -1/4)$ and  that it is loxodromic for
all other values of $c$.

Let $\cchat$  denote the extended complex plane. From \cite{LW92},
pp. 150--151, one has the following theorem.
\begin{theorem}\label{lox}
Suppose $1+ K_{n=1}^{\infty}a_{n}/1$ is limit
$1$-periodic, with $\lim_{n \to \infty}a_{n} = c \not = 0$.
If $t(w) = c/(1+w)$ is loxodromic, then
$1+ K_{n=1}^{\infty}a_{n}/1$ converges to a value $f \in \hat{ \mathbb{C}}$.
\end{theorem}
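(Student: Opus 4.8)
The plan is to conjugate away the loxodromic dynamics and then run a nested-disk contraction argument. Write $s_n(w)=a_n/(1+w)$ and $S_n=s_1\circ s_2\circ\cdots\circ s_n$, so that the $n$-th approximant of $1+K_{n=1}^{\infty}a_n/1$ equals $1+S_n(0)$; convergence of the continued fraction is thus equivalent to convergence of $\{S_n(0)\}$ in $\cchat$. Since $a_n\to c$, the transformations $s_n$ converge, coefficientwise and hence locally uniformly on $\cchat$, to $t(w)=c/(1+w)$.

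First I would normalize the two fixed points. Let $x$ be the attractive and $y$ the repulsive fixed point of $t$, and set $\phi(w)=(w-x)/(w-y)$, so that $\phi(x)=0$ and $\phi(y)=\infty$. The fixed-point equation $w(1+w)=c$ gives $x+y=-1$ and $xy=-c$, whence $|1+x|=|y|$ and $|1+y|=|x|$; consequently the hypothesis $|1+x|>|1+y|$ says precisely that the multiplier $\kappa:=t'(x)=-x/(1+x)$ satisfies $|\kappa|<1$. Conjugation by $\phi$ turns $t$ into the linear map $w\mapsto\kappa w$, and the conjugated maps $\tilde s_n:=\phi\circ s_n\circ\phi^{-1}$ converge to it. Setting $\tilde S_n=\tilde s_1\circ\cdots\circ\tilde s_n$, it suffices to prove that $\{\tilde S_n(w_0)\}$ converges, where $w_0=\phi(0)$; note $w_0\neq\infty$, because $t(0)=c\neq0$ shows that $0$ is not the fixed point $y$.

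Next I would run the contraction estimate on the tails. Given any $\rho$ with $|\kappa|<\rho<1$, the local uniform convergence $\tilde s_n\to(w\mapsto\kappa w)$ lets me choose an integer $N$ and a closed Euclidean disk $D$ about $0$ such that, for every $n\geq N$, the map $\tilde s_n$ sends $D$ into itself and contracts distances on $D$ by a factor at most $\rho$. The nested images $V_n:=(\tilde s_N\circ\tilde s_{N+1}\circ\cdots\circ\tilde s_n)(D)$ then satisfy $V_n\subseteq V_{n-1}$ and $\mathrm{diam}(V_n)\leq\rho^{\,n-N+1}\,\mathrm{diam}(D)\to0$, so $\bigcap_n\overline{V_n}$ is a single point $v^\ast$. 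Since the scaling $w\mapsto\kappa w$ draws every finite point toward $0$, a short capture argument shows that the orbit of the finite seed $w_0$ eventually lands in $D$, forcing $(\tilde s_N\circ\cdots\circ\tilde s_n)(w_0)\to v^\ast$ and hence $\tilde S_n(w_0)=\tilde S_{N-1}\big((\tilde s_N\circ\cdots\circ\tilde s_n)(w_0)\big)\to\tilde S_{N-1}(v^\ast)$. Mapping back through $\phi^{-1}$ and adding the leading $1$ yields convergence of the approximants to some $f\in\cchat$, the value $\infty$ being permitted should the limit coincide with a pole.

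The main obstacle is the perturbation analysis of the previous step: because $\tilde s_n$ only approaches the scaling asymptotically, the early maps $\tilde s_1,\dots,\tilde s_{N-1}$ are uncontrolled, and I must quantify $N$ and the radius of $D$ in terms of the rate at which $a_n\to c$, while ensuring that the seed $w_0$ is not forced off toward the repulsive direction $\phi(y)=\infty$. This is exactly where loxodromicity, as opposed to the elliptic or parabolic cases, is indispensable: only then is there a genuine gap $|\kappa|<1$ providing a uniform contraction factor on a fixed disk, so that a single repulsive fixed point cannot trap the generic orbit issuing from $w_0$.
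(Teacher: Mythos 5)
The paper offers no proof of this theorem---it is quoted from Lorentzen and Waadeland \cite{LW92}, pp.~150--151---so there is no internal argument to compare yours against. On its own terms your proof is correct and follows the standard modern route to this classical fact: the fixed-point algebra $x+y=-1$, $xy=-c$, hence $|1+x|=|y|$, $|1+y|=|x|$ and $\kappa=t'(x)=-x/(1+x)=x/y$ with $|\kappa|<1$, is right; conjugating by $\phi$ and showing that the perturbed maps $\tilde s_n$ are eventually uniform $\rho$-contractions of a fixed disk into itself makes the nested images $V_n$ shrink geometrically to a point, which is exactly what is needed. Three remarks. First, the ``capture argument'' you defer is where the one real imprecision sits: the composition is inner, $T_{N,n}=T_{N,n-1}\circ\tilde s_n$, so there is no forward orbit of $w_0$ to control. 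All that is required is that the single point $\tilde s_n(w_0)$ lie in $D$ for every large $n$, which holds because $\tilde s_n(w_0)\to\kappa w_0$---provided you fix the radius of $D$ larger than $|\kappa|\,|w_0|$ at the outset (your $D$ is chosen before $w_0$ enters the discussion). With that choice, $T_{N,n}(w_0)=T_{N,n-1}\bigl(\tilde s_n(w_0)\bigr)\in V_{n-1}$ and the diameter bound finishes the proof; continuity of the M\"obius maps $\tilde S_{N-1}$ and $\phi^{-1}$ on all of $\cchat$ handles a possible limit at a pole. Second, for the contraction estimate you should note explicitly that the pole of $\tilde s_n$ is $\phi\bigl(s_n^{-1}(y)\bigr)\to\phi(y)=\infty$, so that for large $n$ the maps are holomorphic on a neighbourhood of $D$ and Cauchy's estimates upgrade uniform convergence to uniform convergence of derivatives, which is what the Lipschitz constant $\rho$ on the convex set $D$ rests on. Third, the worries in your closing paragraph are unfounded for precisely the reason above: no quantification of $N$ or of $D$ in terms of the rate at which $a_n\to c$ is needed (a soft ``eventually'' suffices), and the repulsive fixed point at $\infty$ cannot trap anything, since only one application of a late $\tilde s_n$ to the finite seed $w_0$ ever occurs. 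The textbook proof differs mainly in packaging---Lorentzen and Waadeland argue via tail sequences and obtain $S_n(w_n)\to f$ for sequences staying away from the repulsive fixed point---but the mechanism, loxodromicity supplying a genuine contraction toward the attractive fixed point, is the same as yours.
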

Remark: In the cases where $t(w)$ is parabolic or elliptic,
whether $1+ K_{n=1}^{\infty}a_{n}/1$ converges or diverges depends
on how the $a_{n}$ converge to c.

We also make use of Worpitzky's Theorem (see \cite{LW92}, pp.
35--36).
\begin{theorem}(Worpitzky) Let the continued fraction
$K_{n=1}^{\infty}a_{n}/1$ be such that $|a_{n}|\leq 1/4$ for $n
\geq 1$. Then $K_{n=1}^{\infty}a_{n}/1$ converges. All
approximants of the continued fraction lie in the disc $|w|<1/2$
and the value of the continued fraction is in the disk
$|w|\leq1/2$.
\end{theorem}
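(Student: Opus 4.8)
The plan is to realize the approximants as orbits of the linear fractional maps $s_n(w) = a_n/(1+w)$ under the hypothesis $|a_n| \le 1/4$, and to extract convergence from a nested sequence of value-region discs. Write $S_n = s_1 \circ s_2 \circ \cdots \circ s_n$, so the $n$-th approximant is $f_n = S_n(0)$. The first and easiest step is the value region: the closed disc $V = \{w : |w| \le 1/2\}$ is invariant, since for $w \in V$ one has $|1+w| \ge 1 - |w| \ge 1/2$ and hence $|s_n(w)| = |a_n|/|1+w| \le (1/4)/(1/2) = 1/2$. Thus $s_n(V) \subseteq V$ for every $n$, and as $0 \in V$ we get $f_n = S_n(0) \in V$, i.e. $|f_n| \le 1/2$. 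To upgrade this to the strict bound $|f_n| < 1/2$ claimed for the approximants, I would observe that $s_n(V)$ is the disc $\overline{D}(4a_n/3,\, 2|a_n|/3)$, which meets $\partial V$ at most at the single point $s_n(-1/2)$; since the inner iterate defining $f_n$ terminates at $0 \ne -1/2$, the boundary circle is never actually reached.

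Next I would reduce convergence to a shrinking-radius statement. The sets $D_n := S_n(V)$ form a nested decreasing sequence of closed discs, because $D_{n+1} = S_n(s_{n+1}(V)) \subseteq S_n(V) = D_n$; moreover every later approximant lies in $D_n$, since for $m \ge n$ we have $f_m = S_n(s_{n+1}\circ\cdots\circ s_m(0)) \in S_n(V) = D_n$. Consequently, if the radius $R_n$ of $D_n$ tends to $0$, then $\{f_n\}$ is Cauchy and converges to the unique point of $\bigcap_n D_n$, which lies in $V$, yielding both convergence and the value bound $|f| \le 1/2$. Using the representation $S_n(w) = (P_n + P_{n-1}w)/(Q_n + Q_{n-1}w)$ together with the determinant identity $|P_nQ_{n-1} - P_{n-1}Q_n| = \prod_{k=1}^n |a_k|$, the radius is computed explicitly as
\[
R_n = \frac{\tfrac12 \prod_{k=1}^n |a_k|}{\,|Q_n|^2 - \tfrac14 |Q_{n-1}|^2\,}.
\]
The denominator is genuinely positive: setting $g_n = Q_n/Q_{n-1}$, the recurrence $g_n = 1 + a_n/g_{n-1}$ with $g_1 = 1$ gives $|g_n| > 1/2$ by induction, so $|Q_n|^2 > \tfrac14|Q_{n-1}|^2$.

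The main obstacle is the final step, proving $R_n \to 0$. This is delicate precisely because $|a_n| \le 1/4$ sits at the borderline of the parabolic case, so convergence need not be geometric: the crude bounds $\prod_{k=1}^n|a_k| \le 4^{-n}$ and $|Q_n| \ge 2^{-(n-1)}$ cancel exactly in the formula for $R_n$ and yield only $R_n \le \tfrac1{16}$, which is useless. Indeed, in the extremal case $a_n \equiv -1/4$ one finds $Q_n = (n+1)2^{-n}$ and hence $R_n = 1/(2(2n+1))$, so the decay is genuinely only polynomial. The real content is therefore to show that the denominator $D_n := |Q_n|^2 - \tfrac14|Q_{n-1}|^2$ grows strictly faster than $\prod_{k=1}^n|a_k|$, i.e. that $D_n\big/\prod_{k=1}^n|a_k| \to \infty$. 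I would attempt this by deriving a lower recursion for this ratio, tracking $|Q_n|$ beyond the trivial geometric bound and comparing its behaviour against the extremal real profile above; failing an elementary estimate, one may invoke the general theory of nested value regions in \cite{LW92}, which guarantees that the Worpitzky disc produces value-region discs of radius tending to $0$. Once $R_n \to 0$ is established, convergence together with both the approximant bound $|f_n| < 1/2$ and the value bound $|f| \le 1/2$ follow at once from the nested-disc argument above.
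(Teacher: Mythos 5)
The paper itself states Worpitzky's Theorem without proof, citing \cite{LW92}, so the comparison here is with the standard argument rather than with anything in the text. Your setup is correct and is the classical one: the invariance of $V=\{w:|w|\le 1/2\}$, the nested discs $D_n=S_n(V)$ containing all later approximants, the strictness argument giving $|f_n|<1/2$, the radius formula via the determinant identity, and the positivity of $|Q_n|^2-\tfrac14|Q_{n-1}|^2$ through $|Q_n/Q_{n-1}|>1/2$ are all sound. But the argument as written has a genuine gap exactly where you flag one: you never prove $R_n\to 0$, and the fallback of ``invoking the general theory of nested value regions'' is circular here, since that theory applied to the Worpitzky disc \emph{is} the statement being proved. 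Without $R_n\to 0$, the nested discs only show that the approximants accumulate inside a limit disc, not that they converge.

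The gap is closable by a short induction you were one step away from. Set $d_n=|Q_n|-\tfrac12|Q_{n-1}|$. From $Q_n=Q_{n-1}+a_nQ_{n-2}$ and $|a_n|\le\tfrac14$ we get $|Q_n|\ge|Q_{n-1}|-\tfrac14|Q_{n-2}|$, hence $d_n\ge\tfrac12 d_{n-1}$; since $d_1=\tfrac12$ this gives $d_n\ge 2^{-n}$, and iterating $|Q_n|\ge\tfrac12|Q_{n-1}|+2^{-n}$ yields $|Q_n|\ge(n+1)2^{-n}$. Then
\[
|Q_n|^2-\tfrac14|Q_{n-1}|^2=d_n\bigl(|Q_n|+\tfrac12|Q_{n-1}|\bigr)\ge 2^{-n}\,(n+1)2^{-n}=(n+1)4^{-n},
\]
so $R_n\le\dfrac{\tfrac12\,4^{-n}}{(n+1)4^{-n}}=\dfrac{1}{2(n+1)}\to 0$, which is sharp against your extremal example $a_n\equiv-\tfrac14$. (Equivalently, one can bypass the discs and sum $|f_n-f_{n-1}|=\prod_{k=1}^n|a_k|/|Q_nQ_{n-1}|\le\tfrac{1}{2n(n+1)}$ directly.) With this estimate inserted, your proof is complete and agrees with the one in \cite{LW92}.
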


We first consider continued fractions  of the form
{\allowdisplaybreaks
\begin{align*}
G(q):&=1 +
K_{n=1}^{\infty}\frac{a_{n}(q)}{1}:=1+
\frac{f_{1}(q^{0})}{1}
\+\cds \+
\frac{f_{k}(q^{0})}{1}\\
&\+
 \frac{f_{1}(q^{1})}{1}
\+\cds \+
\frac{f_{k}(q^{1})}{1}
\+\cds \+
\frac{f_{1}(q^{n})}{1}
\+ \cds\+
\frac{f_{k}(q^{n})}{1} \+\cds , \notag
\end{align*}
}
where $f_{s}(x)\in \mathbb{Z}[q][x]$, for $1 \leq s
\leq k$. Thus, for $n\geq 0$ and $1 \leq s \leq k$,
{\allowdisplaybreaks
\begin{align}\label{con4}
&a_{nk+s}(q)=f_{s}(q^{n}).&
\end{align}
}
Many well-known $q$-continued fractions, including the
Rogers-Ramanujan continued fraction  and the three Ramanujan-Selberg
continued fractions are of this form, with $k$ at most 2.
Following the example of these four continued fractions,
we make the additional assumptions that,
 for $i \geq 1$,
%{\allowdisplaybreaks
\begin{align}\label{con2}
%&\phantom{as} \notag \\
 \text{degree}(a_{i+1}(q))= \text{degree}(a_{i}(q))+ m, \\
&\phantom{as} \notag
\end{align}
%}
where $m$ is a fixed positive integer, and that
 all of the polynomials $a_{n}(q)$ have the same
leading coefficient. We
prove the following theorem.
\begin{theorem}\label{T4}
 Suppose $G(q) = 1 + K_{n=1}^{\infty}a_{n}(q)/1$
is such that the $a_{n}:= a_{n}(q)$ satisfy \eqref{con4} and
 \eqref{con2}. Suppose further that each $a_{n}(q)$ has the same
 leading coefficient. If  $|q|>1$ then the odd and
even parts of $G(q)$ both converge.
\end{theorem}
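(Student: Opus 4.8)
The plan is to treat the odd and even parts in parallel, since the computation below will show that they are governed by one and the same limiting linear fractional transformation. Fix $q$ with $|q|>1$ and set $u:=q^{-m}$, so that $0<|u|<1$. Each $a_i:=a_i(q)$ is a polynomial in $q$ of degree $d_i$ with $d_{i+1}=d_i+m$ by \eqref{con2}, and all share a common leading coefficient $L$. The first thing I would record is that conditions \eqref{con4} and \eqref{con2} force the gap between the two highest-degree terms of $a_i$ to grow without bound, so that $a_i/(Lq^{d_i})\to1$ as $i\to\infty$; consequently $|a_i|\to\infty$ and, crucially, $a_{i+1}/a_i\to q^{m}=1/u$.

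Next I would specialize the even-part formula \eqref{ep} and the odd-part formula \eqref{op} to the present case $b_n\equiv1$. The even part becomes $1 + a_1/(1+a_2) - a_2a_3/(a_4+a_3+1) - \cds$, whose generic tail quotient is $-a_{2n}a_{2n+1}/(a_{2n+2}+a_{2n+1}+1)$; the odd part has leading term $1+a_1$ followed by tail quotients $-a_{2n-1}a_{2n}/(a_{2n+1}+a_{2n}+1)$. Applying the standard equivalence transformation that clears denominators (take $r_n=1/\beta_n$, $r_0=1$), each of these becomes a continued fraction of the form $(\text{constant}) + K_{n=1}^{\infty}e_n/1$, where $e_n$ is a tail numerator multiplied by the reciprocals of two consecutive tail denominators. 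Because an equivalence transformation leaves the sequence of approximants unchanged, it suffices to prove that these transformed continued fractions converge.

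The heart of the argument is the evaluation of $c:=\lim_{n\to\infty}e_n$, and here lies the one genuine subtlety. In a denominator such as $a_{2n+2}+a_{2n+1}+1$ the two leading terms are \emph{comparable}, since $a_{2n+1}/a_{2n+2}\to u\ne0$; discarding the smaller one would give the wrong limit. Keeping both, one finds for the even part
\begin{align*}
e_n=\frac{-a_{2n}a_{2n+1}}{(a_{2n}+a_{2n-1}+1)(a_{2n+2}+a_{2n+1}+1)}\longrightarrow\frac{-u}{(1+u)^2},
\end{align*}
and the identical limit $c=-u/(1+u)^2$ results for the odd part. Thus both transformed continued fractions are limit $1$-periodic with the same nonzero limit $c$.

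It remains to verify that $t(w)=c/(1+w)$ is loxodromic, after which Theorem \ref{lox} finishes both cases at once (in the odd case the finite additive constant $a_1$ does not affect convergence, so one applies the theorem to $1+K_{n=1}^{\infty}e_n/1$). Since $1+4c=(1-u)^2/(1+u)^2$, the fixed points are $x=-u/(1+u)$ and $y=-1/(1+u)$, whence $1+x=1/(1+u)$ and $1+y=u/(1+u)$, giving $|1+x|/|1+y|=1/|u|=|q|^{m}>1$ for every $q$ with $|q|>1$. Hence $x\ne y$ and $|1+x|\ne|1+y|$, so $t(w)$ is loxodromic (never elliptic or parabolic) throughout the region, and Theorem \ref{lox} yields convergence. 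The main obstacle is therefore not the convergence theorem but the asymptotic bookkeeping that produces the correct $c$: the naive value $-u$ obtained by dropping sub-leading terms would spuriously fall in the elliptic/parabolic range for real $q\in(1,4^{1/m}]$, whereas the correct $c=-u/(1+u)^2$ is loxodromic for \emph{all} $|q|>1$.
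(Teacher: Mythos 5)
Your argument is correct and follows essentially the same route as the paper: both pass to the even and odd parts via \eqref{ep} and \eqref{op}, apply an equivalence transformation to reach a limit $1$-periodic continued fraction $1+K_{n=1}^{\infty}e_{n}/1$ with $e_{n}\to c=-q^{-m}/(1+q^{-m})^{2}$, and conclude by Theorem \ref{lox} after checking that $t(w)=c/(1+w)$ is loxodromic for every $|q|>1$. The only difference is cosmetic: you verify loxodromicity by computing the fixed points explicitly and showing $|1+x|/|1+y|=|q|^{m}\neq 1$, whereas the paper rules out the parabolic and elliptic cases by solving $c=-1/4$ and $c\in(-\infty,-1/4)$; and your justification of the key asymptotic $a_{i+1}/a_{i}\to q^{m}$ is at least as detailed as the paper's, which simply asserts it from \eqref{con2} and the equal leading coefficients.
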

Remark: Worpitzky's Theorem gives only that odd- and even parts of
$G(q)$ converge for those $q$ satisfying
$|(1+q^{m})(1+q^{-m})|>4$, a clearly weaker result.
\begin{proof}
Let $|q|>1$.
For ease of notation we write $a_{n}$ for $a_{n}(q)$.
By \eqref{ep}, the even part of $G(q)$ is given by
{\allowdisplaybreaks
\begin{align*}
%&\phantom{a}\notag\\
&G_{e}(q)
:=1 +
 \frac{a_{1}}{1+a_{2}}
\-
 \frac{a_{2}a_{3}}{a_{4}+a_{3}+1}
\-
 \frac{a_{4}a_{5}}{a_{6}+a_{5}+1}
\-
\cds
 \notag\\
%&\phantom{a}\notag\\
& \approx 1 +
 \frac{\displaystyle{
\frac{a_{1}}{1+a_{2}}}}{1}
\-
 \frac{\displaystyle{
\frac{a_{2}a_{3}}{(1+a_{2})(a_{4}+a_{3}+1)}}}{1}
\-
 \frac{\displaystyle{
\frac{a_{4}a_{5}}{(a_{4}+a_{3}+1)(a_{6}+a_{5}+1)}}}{1}
\-
\cds
\notag\\
%&\phantom{a}\notag\\
&= 1 + K_{n=1}^{\infty}\frac{c_{n}}{1}, \notag
\end{align*}
}
where, for $n \geq 3$,
{\allowdisplaybreaks
\begin{align*}
%&\phantom{a}\notag\\
c_{n} &= \frac{a_{2n-2}a_{2n-1}}{(a_{2n-2}+a_{2n-3}+1)(a_{2n}+a_{2n-1}+1)}.\\
%&\phantom{a}\notag
\end{align*}
}
By  \eqref{con2},  the fact that each of the $a_{i}(q)$'s has
the same leading coefficient and the fact that if $|q|>1$ then
$\lim_{i \to \infty} 1/a_{i} = 0$, it follows that
{\allowdisplaybreaks
\begin{align*}
\lim_{n \to \infty}c_{n} &=
\lim_{n \to \infty}\frac{1}{(1+a_{2n-3}/a_{2n-2}+1/a_{2n-2})
(a_{2n}/a_{2n-1}+1+1/a_{2n-1})}\\
&=\frac{1}{(1+q^{m})(1+q^{-m})}:=c.\notag
%&\phantom{a}\notag
\end{align*}
}
Hence $G_{e}(q)$ is limit $1$-periodic.
Note that the value of $c$ depends on $q$.

Let the fixed points of $t(w)=c/(1+w)$ be denoted  $x$ and $y$.
From the remarks following \eqref{parelox}, it is clear that
 $t(w)$ is parabolic
only in the case $-1/((1+q^{m})(1+q^{-m})) = -1/4$. The only
solution to this equation is $q^{m}=1$, so that $t(w)$ is not
parabolic for any point outside the unit circle.

Similarly, $t(w)$ is elliptic only when $ -
1/((1+q^{m})(1+q^{-m})=-1/4-v$, for some real positive number $v$.
The solutions to this equation satisfy $q^{m} = (i  + \sqrt{v})/(
i  - \sqrt{v})$ or $q^{m} = ( i  - \sqrt{v})/( i + \sqrt{v})$.
However, it is easily seen that these are points on the unit
circle.

In all other cases $t(w)$ is loxodromic and $G_{e}(q)$ converges
in $\hat{\mathbb{C}}$. This proves the result for $G_{e}(q)$.

Similarly, by \eqref{op},
 the odd part of $G(q)$ is given by
{\allowdisplaybreaks
\begin{align*}
%&\phantom{a}\notag\\
&G_{o}(q)
:=\frac{1+a_{1}}{1}
-\frac{a_{1}a_{2}}{a_{3}+a_{2}+1}
\-
 \frac{a_{3}a_{4}}{a_{5}+a_{4}+1}
\-
 \frac{a_{5}a_{6}}{a_{7}+a_{6}+1}
\-
\cds . \notag\\
%&\phantom{a}\notag
\end{align*}
}
The proof in this case is virtually identical.
\end{proof}
As an  application of the above theorem, we have the following
example.
\begin{example} \label{eo1}
If $|q|>1$, then the odd and even parts of
{\allowdisplaybreaks
\begin{multline*}
G(q)=
1 + \frac{6q}{1}
\+
 \frac{3q^{2}+7q}{1}
\+
 \frac{3q^{3}+5q^{2}}{1}
\+
 \frac{q^{4}+7q^{3}+3 q+2}{1}
 \+\\
\frac{q^{5}+3 q^{4}+2q^{3}}{1}
\+
 \frac{q^{6}+2q^{5}+7 q^{3}}{1}
\+
 \frac{q^{7}+7q^{5}}{1}
\+
 \frac{q^{8}+7q^{6}+3q^{3}+2q  }{1}
\+\,\cds \\
\cds
\+
\frac{q^{4n+1}+3 q^{3n+1}+2q^{2n+1}}{1}
\+
 \frac{q^{4n+2}+2q^{3n+2}+7 q^{2n+1}}{1} \\
%\phantom{asdsfsfsf}
\+
 \frac{q^{4n+3}+5q^{3n+2}+2q^{2n+3}}{1}
\+
 \frac{q^{4n+4}+7q^{3n+3}+3q^{2n+1}+2q^{n}  }{1}
\+\,\cds
\end{multline*}
}
converge.
\end{example}
\begin{proof}
Let $k=4$ and
{\allowdisplaybreaks
\begin{align*}
f_{1}(x)&=qx^{4}+3qx^3+2qx^{2},\\
f_{2}(x)&=q^{2}x^{4}+2q^{2}x^3+7qx^{2},\\
f_{3}(x)&=q^{3}x^{4}+5q^{2}x^3+2q^{3}x^{2},\\
f_{4}(x)&=q^{4}x^{4}+7q^{3}x^3+3qx^{2} + 2x.
\end{align*}
}
Then, for $n \geq 0$ and $1 \leq j \leq 4$,
\begin{equation*}
a_{4n+j}(q) = f_{j}(q^{n}).
\end{equation*}
Thus  \eqref{con4} is satisfied. It is clear that
 \eqref{con2} is satisfied with
$M=1$ and each $a_{n}(q)$ has the same leading coefficient,
namely, 1.
\end{proof}
Remark: It is clear form Theorem \ref{T4}
 that if $k=1$ and $f_{i}(x)$ is \emph{any} polynomial with coefficients in
$\mathbb{Z}[q]$, then the odd and even parts of
$1+K_{n=0}^{\infty}f_{1}(q^{n})/1$ converge everywhere outside the unit circle
to values in $\hat{\mathbb{C}}$,
 since all the conditions of the theorem are satisfied automatically, at least for
a tail of the continued fraction.

\vspace{20pt}

We also consider continued fractions of the form
\begin{align*}
G(q):=b_{0}(q) +&
K_{n=1}^{\infty}\frac{a_{n}(q)}{b_{n}(q)}\\
:=g_{0}(q^{0})+&
\frac{f_{1}(q^{0})}{g_{1}(q^{0})}
 \+\,\cds \+
\frac{f_{k-1}(q^{0})}{g_{k-1}(q^{0})}
\+
\frac{f_{k}(q^{0})}{g_{0}(q^{1})} \notag \\
\+
&\frac{f_{1}(q^{1})}{g_{1}(q^{1})}
 \+\,\cds \+
\frac{f_{k-1}(q^{1})}{g_{k-1}(q^{1})}
\+
\frac{f_{k}(q^{1})}{g_{0}(q^{2})} \+\notag \\
  \cds \+
\frac{f_{k}(q^{n-1})}{g_{0}(q^{n})}
\+
&\frac{f_{1}(q^{n})}{g_{1}(q^{n})}
 \+\,\cds \+
\frac{f_{k-1}(q^{n})}{g_{k-1}(q^{n})} \+
\frac{f_{k}(q^{n})}{g_{0}(q^{n+1})} \+\cds \notag
\end{align*}
where $f_{s}(x), g_{s-1}(x) \in \mathbb{Z}[q][x]$, for $1 \leq s
\leq k$. Thus, for $n\geq 0$ and $1 \leq s \leq k$,
\begin{align}\label{con4ab}
&a_{nk+s}(q)=f_{s}(q^{n}),& &b_{nk+s-1}(q)=g_{s-1}(q^{n}).&
\end{align}
An example of a continued fraction of this type is the
G\"{o}llnitz-Gordon continued fraction (with $k=1$).

We suppose that degree $(a_{1}(q))=r_{1}$,
degree $(b_{0}(q))= r_{2}$, and that,
 for $i \geq 1$,
%{\allowdisplaybreaks
\begin{align}\label{con5}
%&\phantom{as} \notag \\
 \text{degree}(a_{i+1}(q))&= \text{degree}(a_{i}(q))+ a, \\
\text{degree}(b_{i}(q))&= \text{degree}(b_{i-1}(q))+ b, \notag
\end{align}
%}
where $a$ and $b$ are fixed positive integers and $r_{1}$ and
$r_{2}$ are non-negative integers. Condition \ref{con5} means
that, for $n \geq 1$,
\begin{align}\label{degeq}
 &\text{degree}(a_{n}(q))=(n-1)a + r_{1},&
& \text{degree}(b_{n}(q))=n\,b + r_{2}.
\end{align}
We also supposed that
each $a_{n}(q)$ has the same leading coefficient  $L_{a}$ and that each $b_{n}(q)$
has the same leading coefficient $L_{b}$.

For such continued fractions we have the following theorem.
%{\allowdisplaybreaks
\begin{theorem}\label{T:p2}
Suppose $G(q) = b_{o} + K_{n=1}^{\infty}a_{n}(q)/b_{n}(q)$
is such that the $a_{n}:= a_{n}(q)$ and the
 $b_{n}:= b_{n}(q)$ satisfy \eqref{con4ab} and \eqref{con5}.
 Suppose further that
 each $a_{n}(q)$ has the same leading
coefficient $L_{a}$ and that each $b_{n}(q)$ has the same leading
coefficient  $L_{b}$. If $2b>a$ then $G(q)$ converges everywhere
outside the unit circle. If $2b =a$,
 then  $G(q)$  converges outside the unit circle
 to values in $\hat{\mathbb{C}}$,
except possibly at points $q$ satisfying
$L_{b}^2/L_{a}q^{b-r_{1}+2r_{2}} \in
\left[-4, 0\right) $.
If $2b<a$, then the odd and even parts of
$G(q)$ converge everywhere outside the unit circle.
\end{theorem}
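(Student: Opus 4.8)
The plan is to remove the partial denominators by an equivalence transformation and then read off the three cases from the resulting limit behaviour, exactly as in the proof of Theorem \ref{T4}. Setting $r_n = 1/b_n$ transforms $G(q) = b_0 + K_{n=1}^{\infty} a_n/b_n$ into the equivalent continued fraction $b_0 + K_{n=1}^{\infty} \alpha_n/1$, where $\alpha_1 = a_1/b_1$ and $\alpha_n = a_n/(b_{n-1}b_n)$ for $n \ge 2$; since an equivalence transformation leaves the approximants $P_n/Q_n$ (and hence convergence, value, and the odd and even parts) unchanged, it suffices to study this new form. Using the degree data \eqref{degeq} together with the hypotheses that every $a_n$ has leading coefficient $L_a$ and every $b_n$ has leading coefficient $L_b$, the leading term of $\alpha_n$ is $\frac{L_a}{L_b^2} q^{E(n)}$ with $E(n) = (n-1)a + r_1 - (2n-1)b - 2r_2 = n(a-2b) + (b - a + r_1 - 2r_2)$. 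Because $E(n+1) - E(n) = a - 2b$ and $|q|>1$, the sign of $a - 2b$ controls everything.

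When $2b > a$ we have $E(n) \to -\infty$, so $\alpha_n \to 0$; choosing $N$ with $|\alpha_n| \le 1/4$ for $n \ge N$ and applying Worpitzky's Theorem to the tail $K_{n=N}^{\infty} \alpha_n/1$ shows that tail converges to a finite value, whence $G(q)$ converges everywhere outside the unit circle. When $2b = a$ the exponent $E(n)$ is constant, so $b_0 + K_{n=1}^{\infty} \alpha_n/1$ is limit $1$-periodic with $\lim \alpha_n = c = \frac{L_a}{L_b^2} q^{-b + r_1 - 2r_2} \ne 0$. I would then invoke Theorem \ref{lox}: whenever $t(w) = c/(1+w)$ is loxodromic the continued fraction converges in $\cchat$. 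Mirroring the parabolic/elliptic analysis after \eqref{parelox}, the map fails to be loxodromic precisely when $c$ is real with $c \le -1/4$, that is, when $1/c = \frac{L_b^2}{L_a} q^{b - r_1 + 2r_2} \in [-4,0)$, which is exactly the exceptional set in the statement.

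The remaining case $2b < a$ is handled by reducing to Theorem \ref{T4}. Here $E(n) \to +\infty$, so $\alpha_n \to \infty$ and the increment $E(n+1) - E(n) = a - 2b =: d > 0$ plays the role of the integer $m$ in Theorem \ref{T4}. I would apply the even- and odd-part formulas \eqref{ep} and \eqref{op} to $b_0 + K_{n=1}^{\infty} \alpha_n/1$, perform the same equivalence transformation used in the proof of Theorem \ref{T4}, and compute that the partial numerators of each of the odd and even parts tend to $-1/\big((1+q^{d})(1+q^{-d})\big)$; here the dominant-term estimates give ratios $\alpha_{n-1}/\alpha_n \to q^{-d}$ and $\alpha_{n+1}/\alpha_n \to q^{d}$, just as in Theorem \ref{T4}. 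As in that proof, the associated $t(w)$ is parabolic only when $q^{d} = 1$ and elliptic only when $q^{d}$ lies on the unit circle, both of which force $|q| = 1$; hence for $|q| > 1$ the map is loxodromic, Theorem \ref{lox} applies, and both the odd and even parts converge everywhere outside the unit circle.

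The main obstacle I anticipate is justifying the limit-periodicity claims and the exact values of the limiting partial numerators from the degree and leading-coefficient data alone, since $\alpha_n$ is now a ratio of polynomials and one must argue that the subdominant contributions (and the reciprocals $1/a_i$, $1/b_i$) vanish for $|q|>1$. A secondary technical point is that the equivalence transformation requires $b_n(q) \ne 0$, which holds for all large $n$ because the leading term $L_b q^{nb + r_2}$ dominates, so the finitely many initial terms may be absorbed by passing to a tail.
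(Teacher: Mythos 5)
Your proposal is correct and follows essentially the same route as the paper's proof: the equivalence transformation to $b_0+K\alpha_n/1$ with $\alpha_n=a_n/(b_{n-1}b_n)$, Worpitzky for $2b>a$, the loxodromic/parabolic/elliptic trichotomy for the limit $1$-periodic case $2b=a$ (your description of the exceptional set as $1/c\in[-4,0)$ is equivalent to the paper's), and the even/odd-part reduction with limiting partial numerator $-1/\bigl((1+q^{d})(1+q^{-d})\bigr)$, $d=a-2b$, for $2b<a$. The only cosmetic difference is that you transform before taking the even and odd parts while the paper does the reverse; since equivalence transformations preserve approximants these commute, and the technical caveats you flag (tails, nonvanishing of $b_n$, justifying the limits from the degree data) are handled no less carefully than in the paper itself.
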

%}
\begin{proof}.
Let $|q|>1$. We first consider the case $2b>a$.
By a simple transformation, we have that
{\allowdisplaybreaks
\begin{equation*}
b_{0}+ K_{n=1}^{\infty}\frac{a_{n}}{b_{n}} \approx
b_{0}+\frac{a_{1}/b_{1}}{1}
\+
K_{n=2}^{\infty}\frac{a_{n}/(b_{n}b_{n-1})}{1}.
\end{equation*}
}
Since $2b>a$, $a_{n}/(b_{n}b_{n-1}) \to 0$ as
$n \to \infty$, and  $G(q)$ converges to a value in $\hat{\mathbb{C}}$,
by Worpitzky's  theorem.

Suppose $2b=a$. Then, by
 \eqref{con5}, \eqref{degeq} and the fact that each $a_{n}(q)$ has the same leading
coefficient  $L_{a}$ and that each $b_{n}(q)$
has the same leading coefficient $L_{b}$,
{\allowdisplaybreaks
\begin{align*}
\lim_{n \to \infty} \frac{a_{n}}{b_{n}b_{n-1}} =
\frac{L_{a}}{L_{b}^{2}q^{b - {r_1} + 2\,{r_2}}}:=c.
\end{align*}
}
Note once again that the value of $c$ depends on $q$.
Once again, by the remarks following \eqref{parelox},
the linear fractional
transformation $t(w) = c/(1+w)$ is parabolic only in the case
$L_{a}/(L_{b}^{2}q^{b - {r_1} + 2\,{r_2}})=-1/4$ or
$q^{b-r_{1}+2r_{2}} = -4L_{a}/L_{b}^{2}$.

Similarly,  $t(w)$ is elliptic only when
$q^{-b + {r_1} - 2\,{r_2}}\,           {L_a}/{{L_b}}^2
\in \left(-\infty,-1/4\right)$,
or
\begin{align*}
q^{b-r_{1}+2r_{2}}&=
\frac{-4\,{L_a}}{\left( 1 + 4\,v \right) \,{{L_b}}^2},
\end{align*}
for some real positive number $v$.
 In other words,
  $t(w)$ is elliptic (for $|q|>1$) only when
$q^{b-r_{1}+2r_{2}}$ lies either in the open interval
$(-4 L_{a}/L_{b}^{2},0)$ or $(0,-4 L_{a}/L_{b}^{2})$,
depending on the sign of $L_{a}$. In all other cases,
$t(w)$ is loxodromic, and $G(q)$ converges.

Suppose $2\,b < a$. From  \eqref{ep} it is clear that the even part of
$G(q)=b_{0}+K_{n=1}^{\infty}a_{n}/b_{n}$
can be transformed into the form
$b_{0} + K_{n=1}^{\infty}c_{n}/1$, where, for $n \geq 3$,
%{\allowdisplaybreaks
\begin{align*}
c_{n} &= \frac{-a_{2n-2}a_{2n-1}
\displaystyle{
\frac{b_{2n}}{b_{2n-2}}}}
{\left(a_{2n-2}+b_{2n-3}b_{2n-2}+a_{2n-3}
\displaystyle{
\frac{b_{2n-2}}{b_{2n-4}}}\right)
\left(a_{2n}+b_{2n-1}b_{2n}+a_{2n-1}
\displaystyle{
\frac{b_{2n}}{b_{2n-2}}}\right)}\\
&=\frac{
\displaystyle{
\frac{-a_{2n-1}b_{2n}}{a_{2n}b_{2n-2}}}}
{\left(1+\displaystyle{
\frac{b_{2n-3}b_{2n-2}}{a_{2n-2}}}
+
\displaystyle{
\frac{a_{2n-3}b_{2n-2}}{a_{2n-2}b_{2n-4}}}\right)
\left(1+\displaystyle{
\frac{b_{2n-1}b_{2n}}{a_{2n}}}
+
\displaystyle{
\frac{a_{2n-1}b_{2n}}{a_{2n}b_{2n-2}}}\right)}. \notag
\end{align*}
%}
Once again using \eqref{con5},  \eqref{degeq}
and the fact that each $a_{n}(q)$ has the same leading
coefficient  $L_{a}$ and that each $b_{n}(q)$
has the same leading coefficient $L_{b}$, we have that
{\allowdisplaybreaks
\begin{align*}
\lim_{n \to \infty} c_{n}
= -\frac{q^{2b-a}}
{\left(1+ q^{2b-a}\right)^{2}}
:=c.
\end{align*}
}
The linear fractional
transformation $t(w) = c/(1+w)$ is parabolic only in the case
$ -q^{2b-a}/ (1+ q^{2b-a})^{2}=-1/4$ or $q^{2b-a}=1$, and thus $|q|=1$.
It is elliptic only when
$ -q^{2b-a}/ (1+ q^{2b-a})^{2} \in (-\infty,-1/4)$, and a simple
argument shows that this implies that
 $|q^{2b-a}|=1$, and again $|q|=1$.

In all other cases $t(w)$ is loxodromic, and the even part of $G(q)$
converges by Theorem \ref{lox}.

The proof for the odd part of $G(q)$ is very similar and is
omitted.
\end{proof}
Remarks: (1) Worpitzky's Theorem once again gives weaker results.
In the example below, for example,  Worpitzky's Theorem gives that
$G(q)$ converges for  $|q| > 4$, in contrast to the result from
our theorem, which says that $G(q)$ converges everywhere outside the unit
circle, except possibly for $q \in [-4,-1)$.

(2) In some cases the result is the best possible. Numerical evidence
suggests that the continued fraction below converges nowhere in the
interval $(-4,-1)$.

As an application of Theorem \ref{T:p2}, we have the following example.
\begin{example}\label{eo2}
If $|q|>1$, then
{\allowdisplaybreaks
\begin{multline*}
G(q)=
q+2 + \frac{6q^{2}}{q^{2}+2}
\+
 \frac{3q^{4}+7q^{2}}{q^{3}+2}
\+
 \frac{3q^{6}+5q^{4}}{q^{4}+2}
\+
 \frac{q^{8}+7q^{6}+3 q^{2}+2}{q^{5}+q+1}
 \+\\
\frac{q^{10}+3 q^{8}+2q^{6}}{q^{6}+q^{2}+1}
\+
 \frac{q^{12}+2q^{10}+7 q^{6}}{q^{7}+q^{2}+1}
\+
 \frac{q^{14}+7q^{10}}{q^{8}+q^{3}}
\+
 \frac{q^{16}+7q^{12}+3q^{6}+2q^{2}  }{q^{9}+q^{2}+1}\\
\+
\cds
\+
\frac{q^{8n+2}+3 q^{6n+2}+2q^{4n+2}}{q^{4n+2}+q^{2n}+1}
\+
 \frac{q^{8n+4}+2q^{6n+4}+7 q^{4n+2}}{q^{4n+3}+q^{2n}+1} \\
%\phantom{asdsfsfsf}
\+
 \frac{q^{8n+6}+5q^{6n+4}+2q^{4n+6}}{q^{4n+4}+q^{3n}+1}
\+
 \frac{q^{8n+8}+7q^{6n+6}+3q^{4n+2}+2q^{2n}  }{q^{4(n+1)+1}+q^{n+1}+1}
\+\,\cds
\end{multline*}
}
converges, except possibly for  $q \in [-4,-1)$.
\end{example}

\begin{proof}
Let $k=4$ and
\begin{align*}
f_{1}(x)&=q^{2}x^{8}+3q^{2}x^6+2q^{2}x^{4},\\
f_{2}(x)&=q^{4}x^{8}+2q^{4}x^6+7q^{2}x^{4},\\
f_{3}(x)&=q^{6}x^{8}+5q^{4}x^6+2q^{6}x^{4},\\
f_{4}(x)&=q^{8}x^{8}+7q^{6}x^6+3q^{2}x^{4} + x^{2}\\
g_{0}(x)&=qx^{4}+x+1,\\
g_{1}(x)&=q^{2}x^{4}+x^{2}+1,\\
g_{2}(x)&=q^{3}x^{4}+x^{2}+1,\\
g_{3}(x)&=q^{4}x^{4}+x^{3}+1.
\end{align*}
Then, for $n \geq 0$ and $1 \leq j \leq 4$,
\begin{align*}
a_{4n+j}(q) &= f_{j}(q^{n}),\\
b_{4n+j-1}(q) &= g_{j-1}(q^{n}).
\end{align*}
The other requirements of the theorem are satisfied, with
$L_{a}=L_{b}=1$, $a=2$, $b=1$, $r_{1}=2$ and $r_{2}=1$.
Therefore $b-r_{1}+2r_{2}=1$, $L_{a}/L_{b}^{2}=1$ and
 $G(q)$ converges outside the unit circle,
except possibly for  $q \in [-4,-1)$.
\end{proof}

{\allowdisplaybreaks

}

\end{document}